\documentclass[12pt,reqno]{amsart}
\usepackage{amsfonts,amssymb,amsmath}
\usepackage{datetime}
\usepackage{xcolor}
\usepackage{enumerate}
\usepackage{multirow}
\usepackage{comment}

\definecolor{fgreen}{RGB}{44,144, 14}

\renewenvironment{proof}{{\bfseries Proof.}}{\qed}

\numberwithin{equation}{section} 
\newtheorem{theorem}{Theorem}[section]

\newtheorem{lemma}[theorem]{Lemma} 
\everymath{\displaystyle}
\theoremstyle{definition}
\newtheorem{definition}[theorem]{Definition}

\usepackage[colorlinks=true,citecolor=cyan,  urlcolor=blue,linkcolor=blue]{hyperref}

\def\R{\mathbb R}

\def\C{\mathbb C}

\def\R{\mathbb R}

\def\R{\mathbb {R}}
\def\C{\mathbb {C}}

\begin{document}

	\title[Equi-Baire One Families]{Equi-Baire One Families of Möbius Transformations and One-Parameter Subgroups of $\mathrm{PSL}(2,\mathbb{C}$)}
    	\author[S.  Dutta, Vanlalruatkimi and Jonathan Ramdikpuia]{Sandipan Dutta, Vanlalruatkimi and Jonathan Ramdikpuia}

	\address{Department of Mathematics and Computer Science, Mizoram University, Aizawl 796004, Mizoram}
	\email{sandipandutta98@gmail.com}
    \address{Department of Mathematics and Computer Science, Mizoram University, Aizawl 796004, Mizoram}
	\email{aruatihmar39@gmail.com }
    \address{Department of Mathematics and Computer Science, Mizoram University, Aizawl 796004, Mizoram}
	\email{jonathanramdikpuia@gmail.com }

	\subjclass[2020]{ Primary 26A18 Secondary
26A21 · 37C25 · 37A10 · 37B05}
	\keywords{Equi-Baire · One-parameter subgroups  · Equicontinuity · Baire one functions · M\"{o}bius transformation · $\mathrm{SL}(2,\C)$}
	
	\date{ @\currenttime , \today}
    \begin{abstract}
    We study the Equi-Baire one property families of M\"obius transformations on the Riemann sphere. For a loxodromic map $f$, we show its iterates $\{f^n\}$ form an orbitally Equi-Baire one family on the attracting basin. For a one-parameter subgroup $\{f_t \}$, we prove it is Equi-Baire one on all compact sets of $\widehat{\mathbb{C}}$ if and only if the subgroup is relatively compact in $\mathrm{SL}(2,\mathbb{C})$. This provides a dynamical characterization of the Equi-Baire one condition for M\"obius families.
\end{abstract}
	\maketitle 
	
	\section{Introduction} 
The theory of Möbius transformations and the action of the group $\mathrm{PSL}(2,\mathbb{C})$ on the extended complex plane $\widehat{\mathbb{C}} = \mathbb{C} \cup \{\infty\}$ has been a central topic in complex analysis and geometry for more than a century. Each element of $\mathrm{SL}(2,\mathbb{C})$ acts on $\widehat{\mathbb{C}}$ by a fractional linear transformation
$$
f(z) = \frac{az + b}{cz + d}, \quad a,b,c,d \in \mathbb{C}, \ ad - bc = 1,$$
and this action preserves the conformal structure of the Riemann sphere. The study of these transformations  was later developed systematically in Beardon~\cite{BA}, Maskit~\cite{MB}, and Goldman~\cite{GW}. Through the classification of elements into elliptic, parabolic, hyperbolic, and loxodromic types based on the trace of their representing matrices, one gains a deep geometric understanding of discrete subgroups of $\mathrm{PSL}(2,\mathbb{C})$ and their dynamics on the Riemann sphere. The loxodromic case, in particular, exhibits rich dynamical behavior, possessing two distinct fixed points, one attracting and one repelling which serves as a prototype for studying contraction and expansion phenomena in complex dynamical systems~\cite{RAT}.

In parallel with these geometric and dynamical developments, the notion of Baire one functions has become an important tool in modern topology and analysis. A function $f : X \to Y$ between metric spaces is said to be of \emph{Baire class one} if it can be expressed as the pointwise limit of a sequence of continuous functions. The concept generalizes continuity and provides a framework for describing functions that are ``almost continuous’’ in a precise topological sense. Foundational studies of such functions appear in the works of Bourgain, Fremlin, and Talagrand~\cite{BOU}, Fenecios et al.~\cite{FJ}, and Horowitz and Todorcevic~\cite{HOR}. More recently, Alikhani-Koopaei~\cite{AK,ALI, AK19} introduced the idea of \emph{Equi–Baire one families}—a generalization of equicontinuity—where the uniformity of continuity is extended from individual continuous maps to families of Baire one functions. This concept allows a unified treatment of uniform convergence and continuity phenomena in settings where classical equicontinuity fails.

Motivated by this interplay between dynamical systems and functional analysis, the present paper investigates the \emph{Equi–Baire one properties} of families of Möbius transformations acting on $\widehat{\mathbb{C}}$. Our focus lies in two fundamental settings: first, the discrete family of iterates $\mathcal{F} = \{f^n : n \ge 0\}$ associated with a loxodromic transformation $f \in \mathrm{SL}(2,\mathbb{C})$; and second, the continuous one–parameter subgroup $\mathcal{F} = \{f_t = \exp(tA) : t \in [0,\infty)\}$ determined by a matrix $A \in \mathrm{SL}(2,\mathbb{C})$. In the first case, we show that the iterates of a loxodromic map converge uniformly to its attracting fixed point on compact subsets of its attracting basin, implying that the sequence $\{f^n\}$ is orbitally Equi–Baire one at every point in this region. In the second case, we classify one–parameter subgroups of $\mathrm{SL}(2,\mathbb{C})$ and establish precise conditions under which the corresponding family $\{f_t\}$ is Equi–Baire one on compact subsets of  $\widehat{\mathbb{C}}:=\C\cup\{\infty\}$.

The main results demonstrate that the family $\{f^n\}$ generated by a loxodromic Möbius transformation forms an Equi–Baire one family on compact neighborhoods of points in its attracting basin, with an explicit $\delta$–function constructed from the chordal metric witnessing this property. Moreover, for a one–parameter subgroup $\{f_t = \exp(tA)\}$, the family $\{f_t\}$ is Equi–Baire one on every compact subset of $\widehat{\mathbb{C}}$ if and only if it is relatively compact in $\mathrm{SL}(2,\mathbb{C})$, i.e., conjugate to a subgroup of $\mathrm{SU}(2)$. These results connect analytic regularity in the Baire sense with geometric dynamics on the Riemann sphere, illustrating how the analytic behavior of families of maps reflects the underlying geometric structure of the transformation group.

\begin{theorem}
\label{thm main1}
Let $f \in \mathrm{SL}(2,\mathbb{C})$ act on $\widehat{\mathbb{C}}$ by the corresponding Möbius map. 
Suppose $f$ is loxodromic, i.e.\ conjugate to $z \mapsto \lambda z$ with $|\lambda| \ne 1$. 
Then for any point $x$ in the attracting basin of the attracting fixed point $p$, the family of iterates 
$$
F = \{f^n : n \ge 0\}
$$ 
is orbitally Equi–Baire one at $x$. 
Moreover, an explicit $\delta$–function witnessing this property is given by
$$
S(x,r) = \sup_{n \ge 0}\, \sup_{y \in B_r(x)} d(f^n(y), f^n(x)),
\qquad S(x,r) \to 0 \text{ as } r \to 0.
$$
\end{theorem}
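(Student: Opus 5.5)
We will prove this by reducing, via conjugation, to the normal form $g(z)=\lambda z$ and then showing directly that $S(x,r)\to 0$. The orbital Equi--Baire one property at $x$ only asks for a $\delta$ such that points $y$ within $\delta$ of $x$ have orbits $f^n(y)$ within $\varepsilon$ of the corresponding $f^n(x)$, uniformly in $n$. So the whole statement is equivalent to $\lim_{r\to0}S(x,r)=0$, i.e.\ equicontinuity of $\{f^n\}$ at $x$ in the chordal metric $d$. Once that limit is established, the $\delta$-function is read off by setting $\delta(x,\varepsilon)=\sup\{r\le 1: S(x,r)<\varepsilon\}/2$, which is positive.

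\textbf{Step 1: normal form.} Write $f=h\circ g\circ h^{-1}$ with $h$ M\"obius and $g(z)=\lambda z$. Replacing $\lambda$ by $1/\lambda$ if necessary, take $|\lambda|<1$, so the attracting fixed point is $p=h(0)$. The repelling fixed point is $h(\infty)$, and the attracting basin is $\widehat{\mathbb{C}}\setminus\{h(\infty)\}$. Since $h$ and $h^{-1}$ are Lipschitz (indeed bi-Lipschitz) on $(\widehat{\mathbb{C}},d)$ with some constant $L$, we get
\[
d(f^n(y),f^n(x))\le L\, d\bigl(g^n(h^{-1}y),g^n(h^{-1}x)\bigr),
\qquad d(h^{-1}y,h^{-1}x)\le L\,d(y,x).
\]
Hence it suffices to prove the claim for $g$ at the point $x'=h^{-1}(x)\in\mathbb{C}$.

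\textbf{Step 2: uniform contraction on a compact disc.} Fix $R>|x'|+1$ and let $K=\{|z|\le R\}$, which contains a chordal neighbourhood of $x'$. On $K$ the chordal and Euclidean metrics are comparable: $c_R|z-w|\le d(z,w)\le 2|z-w|$. Moreover $g^n(K)\subset K$ because $|\lambda|<1$. For $y,x'\in K$ this gives
\[
d(g^n y,g^n x')\le 2|\lambda|^n|y-x'|\le 2|y-x'|\le 2c_R^{-1}\,d(y,x').
\]
Therefore $\sup_n\sup_{y\in B_r(x')}d(g^ny,g^nx')\le C r$ for all small $r$. Transporting this back through Step 1 yields $S(x,r)\le CL^2r\to0$.

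\textbf{Main obstacle.} The only delicate point is the uniformity in $n$ near the ends of the orbit. For large $n$ both orbits converge to $p$; this is handled automatically by the contraction estimate, so no splitting into small and large $n$ is needed. The real care lies in checking that the chordal neighbourhood $B_r(x)$ avoids the repelling point $h(\infty)$ for small $r$, and that the bi-Lipschitz constants of $h$ are finite. Both hold because a M\"obius map is a diffeomorphism of the compact sphere, and $x\neq h(\infty)$.

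Finally, we unpack the Equi--Baire one definition with this $\delta$. Given $\varepsilon$, we choose $\delta(x)$ as above. Then for any $y$ with $d(x,y)<\min(\delta(x),\delta(y))$ we have $d(f^n y,f^n x)\le S(x,d(x,y))<\varepsilon$ for all $n$, since $d(x,y)<\delta(x)$ already suffices. This verifies the property, with $S$ as the explicit witness.
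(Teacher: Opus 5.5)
Your proof is correct, but it takes a more direct route than the paper's main line. The paper first shows $f^n\to p$ uniformly on a compact neighbourhood $K$ of $x$ (Lemma~\ref{lem2}). It then invokes Alikhani--Koopaei's theorem (Lemma~\ref{3.9}) to declare $\{f^n|_K\}$ Equi--Baire one (Lemma~\ref{lem3}). Finally it proves $S(x,r)\to0$ by an $\varepsilon/3$ split: uniform convergence to $p$ handles all $n\ge N$, and continuity at $x$ of the finitely many maps $f^0,\dots,f^{N-1}$ handles the rest (Lemma~\ref{lem4}). The linear bound $S(x,r)\le C'r$ enters only afterwards, as a supplement (Lemma~\ref{lem5}). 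You go straight to the linear bound, so neither the small/large-$n$ splitting nor the Baire-one machinery is needed.

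What each buys: the paper's soft argument uses only continuity of each iterate and local uniform convergence to a constant, so it applies to any such sequence of maps. Yours exploits the M\"obius structure and returns an explicit modulus $S(x,r)\le CL^2r$.

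Your Lipschitz step is also the sound version of the paper's Lemma~\ref{lem5}. You work in the chordal metric with a global bi-Lipschitz constant for $h^{\pm1}$; concretely, $d(Mz,Mw)\le\|M\|^2\,d(z,w)$ for a representing matrix $M\in\mathrm{SL}(2,\C)$. You compare $d(z,w)\le|z-w|$ on all of $\C$ at time $n$, and use the reverse comparison on the disc only at time $0$. This avoids two defects in the paper's Euclidean estimate:
\begin{itemize}
\item the Lipschitz constant of $h^{-1}$ on $V=h(K)$ is applied at $\lambda^n h(y)$ and $\lambda^n h(x)$, which need not lie in $V$;
\item $|f^n(y)-f^n(x)|$ is meaningless when the orbit of $x$ passes through $\infty$ (e.g.\ $x=f^{-1}(\infty)$ when $\infty$ is not fixed).
\end{itemize}
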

\begin{theorem}
    \label{thm main2}
Let $\{f_t\}_{t \in [0,\infty)}$ be a one–parameter subgroup of $\mathrm{SL}(2,\mathbb{C})$, where 
$f_t(A) = \exp(tA)$ acts on $\widehat{\mathbb{C}}$.  
Then the family $F = \{ f_t : t \in [0,\infty) \}$ is Equi–Baire one on every compact 
subset of $\widehat{\mathbb{C}}$ if and only if the subgroup $\{\exp(tA) : t \in \mathbb{R}\}$ is relatively compact in \(\mathrm{SL}(2,\mathbb{C})\); equivalently, if and only if it is conjugate to a subgroup of $\mathrm{SU}(2)$.
\end{theorem}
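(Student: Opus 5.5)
We need to prove Theorem \ref{thm main2}. The definition of Equi–Baire one (in Alikhani-Koopaei's sense) is not restated here. We use it in the form: for every $\varepsilon>0$ there is a positive function $\delta$ on the compact set $K$ such that $\delta(x)$ and $\delta(y)$ both exceeding $d(x,y)$ forces $d(f_t(x),f_t(y))<\varepsilon$ for every $t$. Here $d$ is the chordal metric on $\widehat{\mathbb C}$.

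\emph{Sufficiency.} Suppose $\{\exp(tA)\}$ is relatively compact. By the standard fact that compact subgroups of $\mathrm{SL}(2,\mathbb C)$ are conjugate into $\mathrm{SU}(2)$, write $\exp(tA)=g\,u_t\,g^{-1}$ with $u_t\in\mathrm{SU}(2)$. Elements of $\mathrm{SU}(2)$ act as chordal isometries of $\widehat{\mathbb C}$, and the fixed Möbius map $g$ is bi-Lipschitz for $d$ with some constant $L$. Hence $d(f_t x,f_t y)\le L^2 d(x,y)$ for all $t$, so the family is uniformly equicontinuous on all of $\widehat{\mathbb C}$. The constant function $\delta\equiv \varepsilon/L^2$ then witnesses the Equi–Baire one property on every compact $K$.

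\emph{Necessity.} Suppose the subgroup is not relatively compact. Up to conjugation, which preserves the property because conjugation by a fixed Möbius map is bi-Lipschitz, $A$ is of one of two types.
\begin{itemize}
\item[(a)] $A$ is diagonalizable with eigenvalues $\pm\mu$, $\operatorname{Re}\mu\neq0$. Then $f_t(z)=e^{2t\mu}z$ is hyperbolic or loxodromic.
\item[(b)] $A$ is nilpotent and nonzero. Then $f_t(z)=z+tc$ is parabolic.
\end{itemize}
The case of $A$ diagonalizable with purely imaginary eigenvalues gives a compact subgroup. The key fact in both non-compact cases is the existence of a point $p$ such that, for points $x_k\to p$, the distances $d(f_{t_k}x_k,f_{t_k}p)$ stay bounded below along suitable times $t_k\to\infty$.
\begin{itemize}
\item In case (a), orient so that $0$ is repelling for $t\to\infty$. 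Take $p=0$ and $x_k=e^{-2t_k\mu}$. Then $f_{t_k}x_k=1$ while $f_{t_k}(0)=0$.
\item In case (b), take $p=\infty$ and $x_k=-t_kc$. Then $f_{t_k}x_k=0$ while $f_{t_k}(\infty)=\infty$.
\end{itemize}
In each case the image distance is a fixed positive constant $\varepsilon_0$, while $d(x_k,p)\to0$.

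Now take the compact set $K=\{p\}\cup\{x_k\}$ and $\varepsilon=\varepsilon_0$, and suppose $\delta$ is a witness. Since $\delta(p)>0$, we have $d(x_k,p)<\delta(p)$ for all large $k$. To also obtain $d(x_k,p)<\delta(x_k)$, we exploit freedom in the choice of the points. For each $k$, the whole punctured disc $\{x: d(x,p)<1/k\}$ contains uncountably many points $x$ and corresponding times $t$ with $d(f_t x,f_t p)=\varepsilon_0$. In case (a), for instance, every $x$ with $|x|$ small works with $t$ solving $e^{2t\mu}x$ on the unit circle, adjusting $t$ appropriately when $\mu$ is non-real. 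Cover by the countably many sets $E_m=\{x:\delta(x)>1/m\}$. By the Baire category theorem applied to the compact set $K'$ chosen as the closure of a small closed disc about $p$, some $E_m$ has closure with nonempty interior. Then we pick $x$ in $E_m$ with $d(x,p)<\min(1/m,\delta(p))$ and with $p$ in the closure of $E_m$. We should first arrange $p\in\overline{E_m}$ by instead choosing $p$ as a point of the open set where the failure occurs, which is the repelling or parabolic fixed point, or any point whose orbit diverges appropriately. This produces a contradiction with $d(f_t x,f_t p)=\varepsilon_0$.

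The main obstacle is exactly this last step. The Equi–Baire one condition allows $\delta$ to degenerate at individual points, so one must use a Baire category argument to find a pair $x,y$ with both $\delta(x)$ and $\delta(y)$ exceeding $d(x,y)$ near the bad point. We would handle it by first showing the failure occurs at a dense open set of base points, not just at the fixed point. In case (a), apply the blow-up to pairs $y$, $y+h$ near $0$ using $f_t(y+h)-f_t(y)=e^{2t\mu}h$. In case (b), do the same near $\infty$. Then apply Baire category on a closed disc to find a nonempty open set on which $\delta>1/m$, and pick both points there.
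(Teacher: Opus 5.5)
Your sufficiency argument is correct: conjugating into $\mathrm{SU}(2)$ gives $d(f_tx,f_ty)\le L^2d(x,y)$ uniformly in $t$, so a constant $\delta$ works. The necessity argument has a gap that cannot be repaired, because under the $\varepsilon$--$\delta$ definition you adopted the \emph{only if} direction is false.

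Here is where it breaks. On $K=\{p\}\cup\{x_k\}$ the $x_k$ are isolated, so a witness may take $\delta(x_k)<d(x_k,p)$, and the family is trivially Equi--Baire one on $K$. The Baire category step only yields some open set on which some $E_m$ is dense; nothing forces that set to contain $p$. The claim that equicontinuity fails on a dense open set is wrong, since $f_t(y+h)-f_t(y)=e^{2t\mu}h$ is a Euclidean computation. In the chordal metric, use that $z\mapsto 1/z$ is a chordal isometry and that $d(u,v)\le|u-v|$ for finite $u,v$. In case (a) with $\operatorname{Re}\mu>0$ this gives $d(f_ta,f_tb)\le|e^{-2t\mu}|\,|a^{-1}-b^{-1}|\le|a^{-1}-b^{-1}|$ for all $t\ge0$ and $a,b\neq0$. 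In case (b) it gives $d(f_ta,f_tb)\le|a-b|$ for finite $a,b$. So, after normalization, the family is equicontinuous at every point except the single fixed point $p$.

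Given $\varepsilon$, set $\delta(p)=1$. For $x\neq p$ let $\delta(x)=\min\{\tfrac12 d(x,p),\eta_x\}$, where $\eta_x>0$ is such that $\sup_{t\ge0}d(f_tx,f_ty)<\varepsilon$ whenever $d(x,y)<\eta_x$. A pair $x\ne y$ with $d(x,y)<\min\{\delta(x),\delta(y)\}$ can never contain $p$, so $d(f_tx,f_ty)<\varepsilon$ for all $t\ge0$. Hence $f_t(z)=e^{2t}z$, for example, is Equi--Baire one on all of $\widehat{\mathbb C}$, and so on every compact set, although $\{\exp(tA)\}$ is not relatively compact. This witness also shows exactly why your category argument stalls: every $E_m$ misses the punctured ball of radius $2/m$ about $p$.

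The paper's necessity proof (Lemma~\ref{lem8}) takes a different route that does not transfer to your setting. It uses the single-sequence Definition~\ref{equi-baire_def}, and its contradiction comes only from uniqueness of pointwise limits: $\lim_m g_m(z_0)$ would have to equal both $f_0(z_0)$ and $f_{t_n}(z_0)$. That argument rules out \emph{any} family with two distinct members, so it uses no dynamics. Under that definition the sufficiency direction fails instead. The rotations $z\mapsto e^{2it}z$ form a relatively compact family whose members are not all equal. The dense sequence in Lemma~\ref{lem9} converges to a given $f_t$ only along a subsequence. So neither of the paper's two definitions makes the stated equivalence true, and no completion of your plan can prove it. Under your definition, what holds is the sufficiency you proved; the non-compact cases fail equicontinuity, not the Equi--Baire one property.
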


\subsection{Organization of the paper.}
The paper is structured as follows. Section~\ref{sec2} recalls the basic notions and tools from the theory of Möbius transformations, Baire one functions, and the geometric structure of the Riemann sphere. Section~\ref{sec3} establishes several lemmas that will be used in the proofs of the main results. In Section~\ref{sec4}, we prove Theorem~\ref{thm main1} and Theorem~\ref{thm main2}, describing the Equi--Baire one behaviour of iterated Möbius transformations and one--parameter subgroups of $\mathrm{PSL}(2,\mathbb{C})$.

\section{Preliminaries}
\label{sec2}
In this section, we recall some fundamental notions and notations which will be used
throughout the paper.
\subsection{Möbius transformations and the group $\mathrm{SL}(2,\mathbb{C})$.}
Every element $f \in \mathrm{SL}(2,\mathbb{C})$ acts on the extended complex plane 
$\widehat{\mathbb{C}} = \mathbb{C} \cup \{\infty\}$ by the corresponding Möbius transformation
$$
f(z) = \frac{az+b}{cz+d}, \qquad ad - bc =1.
$$
The theory of Möbius transformations and their geometric properties is classical 
(see Beardon \cite{BA}, Maskit \cite{MB}).  
The projectivization $\mathrm{PSL}(2,\mathbb{C}) = \mathrm{SL}(2,\mathbb{C})/\{\pm I\}$ forms the full group of orientation–preserving
conformal automorphisms of the Riemann sphere $\widehat{\mathbb{C}}$ \cite{BA},\cite{GW}.

\subsection{Classification of elements of $\mathrm{SL}(2,\mathbb{C})$.}
Let $A=\begin{pmatrix} a & b \\ c & d \end{pmatrix}\in \mathrm{SL}(2,\mathbb{C})$ represent $f$.
The trace determines the type:
\begin{itemize}
\item $|{\rm tr}(A)| < 2 \text{ (elliptic)},$ 
\item $ |{\rm tr}(A)| = 2 \text{ (parabolic)}$, 
\item ${\rm tr}(A)\in\mathbb{R}, |{\rm tr}(A)|>2 \text{ (hyperbolic)},$ 
\item ${\rm tr}(A)\notin\mathbb{R} \text{ (loxodromic)}.$
\end{itemize}
One can see \cite{MB} and \cite{RAT} for proofs and dynamical interpretations. \\
 Background on the Lie group structure of $\mathrm{SL}(2,\mathbb{C})$, the exponential map, and the behavior of one–parameter subgroups generated by matrices in $\mathrm{SL}(2,\mathbb{C})$ can be found in 
 
 \cite{HEL} and \cite{KN}.

\begin{definition}[The chordal metric] 

The natural metric on $\widehat{\mathbb{C}}$ is the spherical (chordal) metric
$$
d(z_1,z_2)=\frac{|z_1-z_2|}
{\sqrt{(1+|z_1|^2)(1+|z_2|^2)}},
$$
extended by $d(z,\infty)=(1+|z|^2)^{-1/2}$.
\end{definition}
This metric is classical and widely used in the study of Möbius transformations 
\cite{BA}.  
With this metric, $\widehat{\mathbb{C}}$ becomes a compact metric space.
\begin{definition}[Attracting Basin]
Let $f: \widehat{\mathbb{C}} \to \widehat{\mathbb{C}}$ be a Möbius transformation, and suppose $p$ is an attracting fixed point of $f$ that is, $$f(p)=p, \qquad |f'(p)|< 1$$ Then the attracting basin of $p$, also called the basin of attraction which is defined as the set of all points $x$ whose forward iterates under $f$ which converge to $p$ \cite{CL}. Explicitly $$\mathcal{A}(p)=\{x \in \widehat{\mathbb{C}} :f^n (x)\xrightarrow{n\rightarrow\infty} p \}.$$
\end{definition}
\subsection{Baire one functions and Equi–Baire one families.}
\begin{definition}[Baire-one]
A function $f:X\to Y$ is of {\it Baire class one} if it is the pointwise limit of a sequence of 
continuous functions.     
\end{definition}
 Background references include Bourgain et al. \cite{BOU}, 
Fenecios et al. \cite{FJ}, and Horowitz et al. \cite{HOR}.  
The notion of Equi–Baire one families was introduced and developed by 
Alikhani–Koopaei \cite{ALI},\cite{AK}, extending classical notions of equicontinuity.
\begin{definition}[Equi-Baire one, \cite{ALI}]\label{equi-baire_def}
    Let $X,Y$ be two metric spaces then a family of functions $\mathcal{F}=\{f_t:X\rightarrow Y:\; t\in I\}$ is called and Equi-Baire one family if there exists one single sequence of continuous functions $g_1,g_2,\ldots$ maps from $X$ to $Y$ such that for every $f_i\in \mathcal{F} $ and every $x\in X$ we have $$g_n(x)\xrightarrow{n\rightarrow\infty}f_i(x).$$
\end{definition}
The $I$ in Definition \ref{equi-baire_def} denotes any indexing set.
\par Although we shall use the above definition, the classical $\epsilon-\delta$ definition will help the reader to see the difference between equicontinuity and equi-Baire.
\begin{definition}
     Let $(X,\rho),(Y,d)$ be two metric spaces and $\epsilon>0$. A family of functions $\mathcal{F}=\{f_t:X\rightarrow Y:\; t\in I\}$ is called an Equi-Baire one family if there exists a function $\delta:X\rightarrow \R^+$ such that, for all $x,y\in X$ satisfying $\rho(x,y)<\min\{\delta(x),\delta(y)\}$ we have 
     $$ d(f(x),f(y))\leq \epsilon,\text{ for all }f\in\mathcal{F}.$$
\end{definition}
We will frequently use the following result of Alikhani–Koopaei \cite[Th.~3.9]{ALI}.

\begin{lemma}\label{3.9}
    If $\{f_n\}$ is a uniformly convergent sequence of Baire one maps on a compact metric space,
then $\{f_n\}$ is an Equi–Baire one family.
\end{lemma}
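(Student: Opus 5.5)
The plan is to verify the property in the $\epsilon$–$\delta$ form that the paper itself states immediately after Definition~\ref{equi-baire_def}. That form is the only reading of the paper's definition under which the statement can hold. Taken literally, Definition~\ref{equi-baire_def} asks for one sequence $g_k$ with $g_k(x)\to f_i(x)$ for every $i$, which forces all $f_i$ to coincide pointwise. A uniformly convergent sequence with distinct terms, for example $f_n\equiv 1/n$ on $[0,1]$, would then fail the conclusion. So I will read Definition~\ref{equi-baire_def} as the family version that the $\epsilon$–$\delta$ definition encodes. The goal is: for every $\epsilon>0$, find one $\delta:X\to\R^+$ such that $\rho(x,y)<\min\{\delta(x),\delta(y)\}$ implies $d(f_n(x),f_n(y))\le\epsilon$ for all $n$.

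\textbf{Step 1: the limit.} Let $f$ be the uniform limit of $\{f_n\}$. A uniform limit of Baire one maps into a metric space is Baire one; this is classical. So $f$ and every $f_n$ are Baire one.

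\textbf{Step 2: individual $\delta$'s.} I will invoke the single-function characterization of Baire one maps on a compact metric space, due to Lee–Tang–Zhao and used by Alikhani–Koopaei. It says that $h$ is Baire one if and only if, for every $\eta>0$, there is a positive $\delta_h:X\to\R^+$ such that $\rho(x,y)<\min\{\delta_h(x),\delta_h(y)\}$ implies $d(h(x),h(y))<\eta$. This is the only place compactness, or more generally the setting of that characterization, is used. I expect this to be the main obstacle, since citing or reproving it is where the real content lies. If a proof is needed, I would try the Baire-one oscillation argument: the sets $\{\mathrm{osc}\,h\ge\eta\}$ admit a countable transfinite decomposition into closed sets, and $\delta_h$ is defined piecewise on that decomposition.

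\textbf{Step 3: splitting the sequence.} Fix $\epsilon>0$ and choose $N$ with $\sup_x d(f_n(x),f(x))<\epsilon/3$ for all $n\ge N$. Apply Step 2 with $\eta=\epsilon/3$ to each of the finitely many maps $f,f_1,\dots,f_{N-1}$. Set $\delta(x)=\min\{\delta_f(x),\delta_{f_1}(x),\dots,\delta_{f_{N-1}}(x)\}$, which is strictly positive because the minimum is finite.

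\textbf{Step 4: checking the estimate.} Suppose $\rho(x,y)<\min\{\delta(x),\delta(y)\}$. For $n<N$ we get $d(f_n(x),f_n(y))<\epsilon/3$ directly. For $n\ge N$ the triangle inequality gives
\[
d(f_n(x),f_n(y))\le d(f_n(x),f(x))+d(f(x),f(y))+d(f(y),f_n(y))<\epsilon .
\]
Hence $\{f_n\}$ is Equi–Baire one in the paper's $\epsilon$–$\delta$ sense. Uniform convergence is essential here, because it reduces infinitely many maps to finitely many plus one limit map.
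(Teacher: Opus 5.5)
The paper contains no proof of this lemma. It is imported from Alikhani--Koopaei (their Theorem~3.9), so your proposal is a self-contained substitute for a citation rather than a variant of an argument in the text. Your proposal is correct, and it is the natural proof. Uniform convergence reduces the infinite family to finitely many maps plus one, and a minimum of finitely many single-function gauges is still a positive gauge. Your preliminary remark is also correct and necessary. Read literally, Definition~\ref{equi-baire_def} forces all members of the family to agree pointwise, because the single sequence $g_k(x)$ has only one limit. So $f_n\equiv 1/n$ on $[0,1]$ already contradicts the lemma, and only the $\epsilon$--$\delta$ form, with ``for every $\epsilon>0$'' in front, makes the statement true.

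Two changes would make your argument lighter.

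\textbf{Step~1 can be dropped.} Choose $N$ with $\sup_x d(f_n(x),f_N(x))<\epsilon/3$ for all $n\ge N$, and put $\delta=\min\{\delta_{f_1},\dots,\delta_{f_N}\}$. Your triangle inequality with $f_N$ in place of $f$ then finishes the proof. You no longer need the fact that a uniform limit of Baire one maps into an arbitrary metric space is Baire one; this is standard for real-valued maps but less routine in general.

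\textbf{Step~2 has a short direct proof.} The direction you use does not need the transfinite oscillation derivation or Baire category.
\begin{enumerate}[(i)]
\item If $h=\lim_k g_k$ pointwise with each $g_k$ continuous, then $h^{-1}(U)$ is $F_\sigma$ for every open $U$.
\item The range satisfies $h(X)\subseteq\overline{\bigcup_k g_k(X)}$, which is separable.
\item Cover $h(X)$ by countably many balls of radius $\eta/3$ and pull back. This gives $X=\bigcup_j E_j$ with each $E_j$ closed and $\mathrm{diam}\,h(E_j)<\eta$.
\item Let $k(x)=\min\{j:x\in E_j\}$ and $\delta_h(x)=\mathrm{dist}(x,E_1\cup\dots\cup E_{k(x)-1})$, with $\delta_h(x)=1$ when $k(x)=1$.
\item Then $\rho(x,y)<\min\{\delta_h(x),\delta_h(y)\}$ forces $k(x)=k(y)$, so $d(h(x),h(y))<\eta$.
\end{enumerate}
This also shows that compactness enters only through the separability of $X$.
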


\subsection{The Riemann sphere and stereographic projection.}
The extended complex plane $\widehat{\mathbb{C}}$ is identified with the sphere $S^2$ 
via stereographic projection \cite{BA}.  
The standard formulae for $\sigma$ and $\sigma^{-1}$ endow $\widehat{\mathbb{C}}$ with the usual 
conformal structure making Möbius transformations conformal automorphisms of $S^2$.

The action of $\mathrm{PSL}(2,\mathbb{C})$ on $\widehat{\mathbb{C}}$ is faithful and $3$-transitive 
\cite{BA}, \cite{GW}.  
These properties play a central role in the conjugacy classification of Möbius transformations.

\subsubsection{Conjugacy and fixed points.}
Two elements of $\mathrm{PSL}(2,\mathbb{C})$ are conjugate precisely when they are dynamically
equivalent (their actions differ only by a change of coordinate) \cite{BA}, \cite{MB}.  
The computation of fixed points via 
$$
cz^2+(d-a)z-b=0
$$
is given in \cite{BA}. In the loxodromic case, one fixed point is attracting and the other repelling 
\cite{RAT}, \cite{GW}.

\section{Useful Lemmas}
\label{sec3}
In this section, we establish some lemmas which will be used to prove our main theorems. 

\begin{lemma}
\label{lem1}
If $f \in \mathrm{SL}(2,\mathbb{C})$ is loxodromic, then there exists a Möbius transformation 
$h \in \mathrm{PSL}(2,\mathbb{C})$ and a complex number $\lambda$ with $0 < |\lambda| < 1$ such that 
$$
f = h^{-1} \circ g \circ h, \qquad g(z)=\lambda z.
$$
In particular, $f$ has two distinct fixed points $p,q \in \widehat{\mathbb{C}}$, where 
$p=h^{-1}(0)$ is attracting and $q=h^{-1}(\infty)$ is repelling.
\end{lemma}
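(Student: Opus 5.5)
Let $A=\begin{pmatrix} a & b\\ c& d\end{pmatrix}$ be a matrix representing $f$. The plan is to first produce the two distinct fixed points and then conjugate them to $0$ and $\infty$.

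\textbf{Step 1: two distinct fixed points.} The fixed points of $f$ are the roots in $\widehat{\mathbb{C}}$ of $cz^2+(d-a)z-b=0$, with $\infty$ a root when $c=0$. The discriminant is
$$(d-a)^2+4bc=(a+d)^2-4(ad-bc)=\mathrm{tr}(A)^2-4.$$
For a loxodromic (or hyperbolic) element this is nonzero, so the quadratic has two distinct roots. The case $c=0$ is separate: then $ad=1$ and $a\neq d$, since otherwise $a=d=\pm1$ and $\mathrm{tr}(A)=\pm2$. So the fixed points are $\infty$ and $b/(d-a)$. Either way we get distinct $p,q\in\widehat{\mathbb{C}}$.

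\textbf{Step 2: conjugation.} By the 3-transitivity recalled in Section~\ref{sec2}, choose $h\in\mathrm{PSL}(2,\mathbb{C})$ with $h(p)=0$ and $h(q)=\infty$. For instance, $h(z)=(z-p)/(z-q)$ when both are finite, with the obvious modification otherwise. Set $g=h\circ f\circ h^{-1}$. Then $g$ is Möbius and fixes $0$ and $\infty$. Fixing $\infty$ forces $g(z)=\alpha z+\beta$, and fixing $0$ forces $\beta=0$, so $g(z)=\lambda z$ with $\lambda\ne0$ and $f=h^{-1}\circ g\circ h$.

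\textbf{Step 3: computing $\lambda$.} A matrix for $g$ is $\mathrm{diag}(\mu,\mu^{-1})$ with $\lambda=\mu^2$, up to sign. Conjugation preserves trace up to sign, so $\mu+\mu^{-1}=\pm\mathrm{tr}(A)$. If $|\lambda|=1$, then $|\mu|=1$ and $\mu+\mu^{-1}=\mu+\bar\mu\in\mathbb{R}$. This contradicts $\mathrm{tr}(A)\notin\mathbb{R}$, or, under the theorem's convention that $f$ is conjugate to $z\mapsto\lambda z$ with $|\lambda|\ne1$, it is simply the hypothesis. Hence $|\lambda|\neq1$. If $|\lambda|>1$, swap the roles of $p$ and $q$, i.e.\ replace $h$ by $1/h$. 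This turns $\lambda$ into $\lambda^{-1}$, so we may assume $0<|\lambda|<1$.

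\textbf{Step 4: attracting and repelling.} For $g$ we have $g^n(z)=\lambda^n z\to0$ for every $z\neq\infty$, and $g'(0)=\lambda$ with $|\lambda|<1$. So $0$ is attracting and $\infty$ is repelling; in the coordinate $w=1/z$ the map becomes $w\mapsto\lambda^{-1}w$. Conjugation by the homeomorphism $h$ transports this: $f^n(z)=h^{-1}(\lambda^n h(z))\to h^{-1}(0)=p$ for $z\ne q$. The multiplier is a conjugacy invariant, so $|f'(p)|=|\lambda|<1$, and the corresponding computation at $q$ gives multiplier $\lambda^{-1}$.

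\textbf{Main obstacle.} The delicate point is Step 3. One must reconcile the paper's trace-based definition of loxodromic ($\mathrm{tr}\notin\mathbb{R}$) with the condition $|\lambda|\neq1$, and keep track of the $\pm$ sign ambiguity between $\mathrm{SL}$ and $\mathrm{PSL}$. The computation above handles both. The remaining steps are routine.
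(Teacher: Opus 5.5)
Your proof is correct and follows the same route as the paper's argument, which appears at the start of the proof of Theorem~\ref{thm main1}: you find the two distinct fixed points as roots of $cz^2+(d-a)z-b=0$, conjugate them to $0$ and $\infty$ via $h(z)=(z-p)/(z-q)$, and conclude $g(z)=\lambda z$. You also supply details the paper only asserts, namely that $|\lambda|\neq 1$ (via $\mu+\mu^{-1}=\pm\mathrm{tr}(A)$ with $\lambda=\mu^2$) and that $|\lambda|<1$ can be arranged by swapping $p$ and $q$.
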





\begin{lemma}
    \label{lem2}
Let $x$ lie in the attracting basin of the attracting fixed point $p$ of a loxodromic 
map $f \in \mathrm{SL}(2,\mathbb{C})$. 
Let $h \in \mathrm{PSL}(2,\mathbb{C})$ be a Möbius transformation such that 
$f = h^{-1} \circ g \circ h$, where $g(z) = \lambda z$ and $0<|\lambda|<1$. 
Then there exists a compact neighborhood $K$ of $x$ such that 
$$
f^n|_K = h^{-1} \circ g^n \circ h|_K \longrightarrow p
$$
uniformly on $K$ as $n \to \infty$.
\end{lemma}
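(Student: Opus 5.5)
Since $x$ lies in the attracting basin $\mathcal{A}(p)$, we have $x\neq q$, where $q=h^{-1}(\infty)$ is the repelling fixed point from Lemma~\ref{lem1}. The plan is to transfer everything to the normal form $g(z)=\lambda z$ via the conjugacy $h$, prove uniform convergence there using the explicit formula $g^n(w)=\lambda^n w$, and then pull back using uniform continuity of $h^{-1}$ on the compact sphere.

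First we choose $K$. Because $h$ is a homeomorphism of $\widehat{\mathbb{C}}$ with $h(q)=\infty$ and $h(x)\neq\infty$, the point $w_0=h(x)$ lies in $\mathbb{C}$. Take $R=|w_0|+1$ and set
$$
K=h^{-1}\bigl(\overline{D(0,R)}\bigr),
$$
where $\overline{D(0,R)}=\{|w|\le R\}$. This set is compact, being the continuous image of a compact set. It is a neighborhood of $x$, since it contains the open set $h^{-1}(D(0,R))\ni x$. It also avoids $q$.

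Next we get uniform convergence in the model. For $w\in\overline{D(0,R)}$ we have $|g^n(w)-0|=|\lambda|^n|w|\le |\lambda|^n R\to 0$, since $0<|\lambda|<1$. Thus $g^n\to 0$ uniformly on $\overline{D(0,R)}$ in the Euclidean metric. The chordal metric satisfies $d(z_1,z_2)\le |z_1-z_2|$, so the convergence is also uniform in the chordal metric $d$.

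Finally we pull back. On $K$ we have $f^n=h^{-1}\circ g^n\circ h$, because conjugation commutes with iteration. The map $h^{-1}$ is continuous on the compact space $(\widehat{\mathbb{C}},d)$, hence uniformly continuous. Given $\varepsilon>0$, choose $\eta>0$ such that $d(u,0)<\eta$ implies $d(h^{-1}(u),p)<\varepsilon$; here we use $h^{-1}(0)=p$. Then choose $N$ with $|\lambda|^nR<\eta$ for all $n\ge N$. For $y\in K$ and $n\ge N$ we get $d(f^n(y),p)<\varepsilon$, which gives uniform convergence on $K$.

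The only delicate point is keeping $K$ away from the repelling point $q$, where convergence genuinely fails. The choice of $K$ as the preimage of a bounded disk handles this. The rest is routine.
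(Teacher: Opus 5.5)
Your proof is correct and follows essentially the same route as the paper: conjugate to $g(z)=\lambda z$, get $|\lambda^n w|\le|\lambda|^n R\to 0$ uniformly on a bounded set in the model, and pull back through $h^{-1}$ using $h^{-1}(0)=p$. Your version is more careful than the paper's in two places. First, taking $K=h^{-1}\bigl(\overline{D(0,R)}\bigr)$ explicitly keeps $K$ away from the repelling point $q$, so $h(K)$ is bounded; the paper's ``small neighborhood'' needs this but does not say so. Second, you use continuity of $h^{-1}$ at $0$, which is what is actually needed; the paper instead appeals to continuity on $\overline{V}$, a set that need not contain the points $g^n(h(y))$.
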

\begin{proof}
Choose a small neighborhood $U \subset \widehat{\mathbb{C}}$ of $x$ such that $h$ is 
a biholomorphism from $U$ onto an open set $V = h(U) \subset \widehat{\mathbb{C}}$. 
Let $K = \overline{U}$ and $\overline{V} = h(K)$. 
Since $g(z) = \lambda z$ with $0<|\lambda|<1$, for any $z \in \overline{V}$ we have 
$$
|g^n(z)| = |\lambda|^n |z| \to 0 \quad \text{uniformly on } \overline{V}.
$$
Therefore $g^n(z) \to 0$ uniformly on $\overline{V}$. 
By continuity of $h^{-1}$ on the compact set $\overline{V}$, the convergence is 
preserved under $h^{-1}$, and hence 
$$
f^n(z) = h^{-1}(g^n(h(z))) \longrightarrow h^{-1}(0) = p
$$
uniformly on $K$. 
This shows that the iterates $\{f^n\}$ converge uniformly to the constant map $p$ 
on a compact neighborhood of $x$. 
\end{proof}

\begin{lemma}
   \label{lem3}
Let $K$ be the compact neighborhood of $x$ obtained in Lemma \ref{lem2}. 
Then each iterate $f^n|_K$ is a continuous (hence Baire one) function, and the 
sequence $\{f^n|_K\}_{n\ge0}$ converges uniformly on $K$ to the constant map 
$p$. Consequently, the family $\{f^n|_K\}_{n\geq0}$ is 
\textit{Equi–Baire one} on $K$.
\end{lemma}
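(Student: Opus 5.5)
The plan is to verify the two hypotheses of Lemma~\ref{3.9} for the sequence $\{f^n|_K\}_{n\ge0}$ on the compact metric space $(K,d)$, where $d$ is the chordal metric, and then invoke that lemma.

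\emph{Continuity and Baire one.} Each $f^n$ is a Möbius transformation, being the $n$-fold composition of $f$, and it is a homeomorphism of $\widehat{\mathbb{C}}$ with respect to the chordal metric. Hence the restriction $f^n|_K$ is continuous from $(K,d)$ to $(\widehat{\mathbb{C}},d)$. A continuous function is trivially Baire class one, since it is the pointwise limit of the constant sequence $g_m=f^n|_K$.

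\emph{Uniform convergence.} $K$ is compact because it is a closed subset of the compact space $\widehat{\mathbb{C}}$. Lemma~\ref{lem2} gives
$$\sup_{z\in K} d\bigl(f^n(z),p\bigr)\to 0 \quad\text{as } n\to\infty.$$
So the sequence converges uniformly on $K$ to the constant map $p$. Uniform convergence implies the sequence is uniformly Cauchy, and this is the hypothesis Lemma~\ref{3.9} needs. Applying Lemma~\ref{3.9} then shows that $\{f^n|_K\}_{n\ge0}$ is Equi--Baire one on $K$.

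\emph{Main obstacle.} The only delicate point is the uniform convergence supplied by Lemma~\ref{lem2}. That argument needs $h(K)$ to be a bounded subset of $\mathbb{C}$, so that $|\lambda|^n|z|\to0$ uniformly. This holds exactly when $K$ avoids the repelling fixed point $q=h^{-1}(\infty)$. So I would first check that $x\neq q$, which is true because $x$ lies in the basin of $p$ and $q$ is fixed. Then I would shrink $U$ so that $\overline{U}$ stays away from $q$; this makes $h(K)$ compact in $\mathbb{C}$. With that, $h^{-1}$ is uniformly continuous on the compact set $\{0\}\cup\bigcup_n g^n(h(K))\subset \overline{D}(0,R)$, and uniform convergence in the chordal metric follows.
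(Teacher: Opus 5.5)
Your proof is the paper's argument. Each Möbius iterate is continuous, hence Baire one; Lemma~\ref{lem2} gives uniform convergence $f^n|_K\to p$ on the compact set $K$; and Lemma~\ref{3.9} then yields the conclusion, read in the $\varepsilon$--$\delta$ sense of that lemma, since under the paper's single-sequence definition it would force $f^0|_K=f^1|_K$. You also shrink $U$ so that $\overline{U}$ avoids the repelling point $q$, which makes $h(K)$ bounded in $\mathbb{C}$ so that $|\lambda|^n|z|\to 0$ uniformly; the paper's proof of Lemma~\ref{lem2} leaves this point implicit, and you state it correctly.
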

\begin{proof}
From Lemma \ref{lem2} we have uniform convergence $f^n|_K \to p$ on the compact metric 
space $K$. Since each $f^n$ is a Möbius transformation, it is continuous on 
$\widehat{\mathbb{C}}$, and therefore Baire one. 

By Lemma \ref{3.9} (which states that a uniformly convergent sequence of Baire one 
functions on a compact metric space forms an Equi–Baire one family), it follows 
that the sequence $\{f^n|_K\}$ is Equi–Baire one on $K$. 

Hence, for every $\varepsilon>0$, there exists a $\delta>0$ (depending on $x$) 
such that for all $y$ satisfying $d(y,x)<\delta$,
$$
d(f^n(y), f^n(x)) < \varepsilon \quad \text{for all } n\geq 0.
$$
This proves that $\{f^n\}$ is orbitally Equi–Baire one at $x$.
\end{proof}

\begin{lemma}
    \label{lem4}
Let $d$ denote the chordal metric on $\widehat{\mathbb{C}}$. 
For $r>0$, define 
$$
S(x,r) = \sup_{n\ge0} \, \sup_{y \in B_r(x)} d(f^n(y), f^n(x)).
$$
Then:

\begin{enumerate}[(i)]
    \item $S(x,r)$ is finite for each fixed $r>0$;
    \item $S(x,r) \to 0$ as $r \to 0$;
    \item The function $r \mapsto S(x,r)$ is upper semicontinuous.
\end{enumerate}
\end{lemma}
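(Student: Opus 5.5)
The plan is to treat the three parts separately, using the uniform convergence on $K$ from Lemma~\ref{lem2} as the main input. Throughout we take $r$ small enough that $B_r(x)\subset K$; for larger $r$ only (i) matters. We may shrink $K$ if necessary so that it avoids the repelling fixed point $q$. Then $h(K)$ is a compact subset of $\mathbb{C}$, and the uniform convergence in Lemma~\ref{lem2} is legitimate.

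Part (i) is immediate. The chordal metric satisfies $d(z_1,z_2)\le 1$ for all $z_1,z_2\in\widehat{\mathbb{C}}$, so $S(x,r)\le 1$ for every $r>0$.

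For (ii) we use the standard splitting into a tail and finitely many maps. Fix $\varepsilon>0$. By Lemma~\ref{lem2} there is $N$ with $d(f^n(y),p)<\varepsilon/2$ for all $y\in K$ and all $n\ge N$. Applying this to both $y$ and $x$ gives $d(f^n(y),f^n(x))<\varepsilon$ for $n\ge N$ and every $y\in K$. The remaining maps $f^0,\dots,f^{N-1}$ are finitely many continuous maps on the compact set $K$, hence uniformly continuous. So there is $r_0>0$ with $B_{r_0}(x)\subset K$ and $d(f^n(y),f^n(x))<\varepsilon$ for all $y\in B_{r_0}(x)$ and all $n<N$. Hence $S(x,r)\le\varepsilon$ for $r\le r_0$. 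The same argument, applied at every point of $K$ rather than only at $x$, shows that $\{f^n|_K\}$ is uniformly equicontinuous on $K$. We record this for use in (iii).

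For (iii), note first that $r\mapsto S(x,r)$ is nondecreasing, because the balls are nested. A nondecreasing function is upper semicontinuous exactly when it is right-continuous, so we must show $\lim_{r\downarrow r_0}S(x,r)=S(x,r_0)$. We proceed in two steps.
\begin{enumerate}[(a)]
\item Each map $y\mapsto d(f^n(y),f^n(x))$ is continuous. Therefore its supremum over $B_{r_0}(x)$ equals its supremum over the closure $\overline{B_{r_0}(x)}$. In the chordal metric, balls are spherical caps, so this closure is the closed ball $\{y: d(y,x)\le r_0\}$.
\item Let $\eta>0$. By uniform equicontinuity there is $\rho>0$ such that $d(y,y')<\rho$ implies $d(f^n(y),f^n(y'))<\eta$ for all $n$. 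For $r_0<r<r_0+\rho$, every $y\in B_r(x)$ lies within $\rho$ of some point $y'$ of the closed ball of radius $r_0$; we take $y'$ on the geodesic arc from $x$ to $y$. Hence $S(x,r)\le S(x,r_0)+\eta$. Since $\eta$ is arbitrary, right-continuity follows.
\end{enumerate}

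The step we expect to be the main obstacle is (iii). It requires two facts at once: uniform equicontinuity of the whole family, not merely of finitely many iterates, and the geometric fact that the chordal open ball is dense in the closed ball. Both hold here, but the argument needs $r$ small enough that the balls stay inside $K$. For large $r$ the statement should be read with $S$ capped by the bound from (i).
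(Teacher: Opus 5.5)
Your parts (i) and (ii) follow the paper. Part (ii) is the same split: the tail $n\ge N$ is controlled by the uniform convergence $f^n\to p$ on $K$ from Lemma~\ref{lem2}, and the remaining finitely many continuous iterates are handled directly. Your (i), via $d\le 1$, gives a cleaner reason for finiteness than the paper's compactness remark. You also make explicit two things the paper uses tacitly: $q\notin K$ and $B_r(x)\subseteq K$.

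For (iii) you take a genuinely different and better route. The paper only observes that $r\mapsto S(x,r)$ is nondecreasing, and that alone does not give upper semicontinuity. A nondecreasing function is u.s.c.\ iff it is right-continuous; for example, $\mathbf{1}_{(r_0,\infty)}$ is monotone but not u.s.c.\ at $r_0$. Your argument supplies exactly the missing right-continuity: uniform equicontinuity of the whole family $\{f^n|_K\}$, combined with density of the open chordal cap in the closed one.

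One small repair is needed. Chordal distance is not additive along great-circle arcs. With $s=d(x,y)$ one gets $d(y',y)=\sin(\arcsin s-\arcsin r_0)$, which is $\ge s-r_0$ and $\le (s-r_0)/\sqrt{1-s^2}$. So you need $r<r_0+c\rho$ for a suitable constant $c>0$, not $r<r_0+\rho$.

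The real caveat is the range of $r$. Your proof of (iii) only covers radii with $B_r(x)\subseteq K$. The remark that ``for larger $r$ only (i) matters'' does not justify (iii) there, and capping by the bound from (i) changes nothing, since $S\le 1$ anyway.

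If $B_r(x)$ is the chordal ball in $\widehat{\mathbb{C}}$, the situation is as follows.
\begin{itemize}
\item Your equicontinuity argument extends to every $r_0$ with $q\notin\overline{B_{r_0}(x)}$, because uniform convergence holds on every compact subset of $\widehat{\mathbb{C}}\setminus\{q\}$.
\item For $r>d(x,q)$ one has $S(x,r)=1$: the $f^n$-image of a neighbourhood of $q$ eventually contains the antipode of $f^n(x)$.
\item At $r_0=d(x,q)$, upper semicontinuity can genuinely fail when $\lambda>0$, which the definition of loxodromic in Theorem~\ref{thm main1} allows.
\end{itemize}

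Here is an example of the failure. Take $f(z)=\lambda z/((\lambda-1)z+1)$ with $0<\lambda<1$, so $h(z)=z/(z-1)$, $p=0$, $q=1$. For $0<\rho<\tfrac12$, the disc $\{|z-(1-\rho)|<\rho\}$ is a chordal ball $B_{r_0}(x)$ with $q$ on its boundary circle. The map $h$ sends it onto $\{\mathrm{Re}\,w<-A\}$ for some $A>0$. Hence every $f^n(B_{r_0}(x))$, together with $f^n(x)$, lies in $h^{-1}(\{\mathrm{Re}\,w<0\})=\{|z-\tfrac12|<\tfrac12\}$. That cap has chordal diameter $d(0,1)=1/\sqrt2$. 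So $S(x,r_0)\le 1/\sqrt2<1=S(x,r)$ for every $r>r_0$.

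When the trace is non-real, the rotation of $\lambda^n$ forces $S(x,d(x,q))=1$, so u.s.c.\ survives there, but that case needs its own argument.

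So the restriction to small $r$ is not cosmetic. Either state (iii) only for $r$ with $B_r(x)\subseteq K$, or read $B_r(x)$ as $B_r(x)\cap K$ with $K$ a closed chordal ball about $x$. In the latter reading your argument gives u.s.c.\ for all $r>0$. Either way, make the restriction explicit rather than dismissing it.
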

\begin{proof}
For each fixed $r>0$, the set $\overline{B_r(x)} \cap K$ is compact and each 
$f^n$ is continuous on $K$. Hence $S(x,r)$ is finite as a supremum of finitely 
bounded quantities.

To prove the limit property, let $\varepsilon>0$. 
From Lemma~\ref{lem2}, $f^n \to p$ uniformly on $K$. 
Therefore, there exists $N \in \mathbb{N}$ such that 
$$
\sup_{y\in B_r(x)} d(f^n(y),p) < \frac{\varepsilon}{3} \qquad \text{for all } n \ge N.
$$
For the finitely many iterates $f^0, f^1, \dots, f^{N-1}$, continuity at $x$ 
provides $\delta>0$ such that 
$$
\sup_{y\in B_\delta(x)} d(f^n(y), f^n(x)) < \frac{\varepsilon}{3}, 
\qquad \text{for all } 0 \le n < N.
$$
Combining these estimates, we obtain for all $n \ge 0$ and 
$y \in B_\delta(x)$,
$$
d(f^n(y), f^n(x)) < \varepsilon.
$$
Hence $S(x,r) \to 0$ as $r \to 0$.

Finally, upper semicontinuity of $S(x,r)$ follows directly from the definition 
of the supremum, since enlarging $r$ can only increase (or preserve) the set 
over which the supremum is taken.
\end{proof}
\begin{lemma}
\label{lem5}
With the notation of the previous lemmas, there exists a constant $C>0$ such that 
for all $y$ sufficiently close to $x$ and for all $n \ge 0$,
$$
|f^n(y) - f^n(x)| \le C\,|\lambda|^n\,|h(y) - h(x)|.
$$
Consequently, one may take the simpler bound
$$
S(x,r) \le C'\,r,
$$
which also satisfies $S(x,r) \to 0$ as $r \to 0$.
\end{lemma}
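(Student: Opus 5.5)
The plan is to prove the estimate first in the chordal metric $d$, where every Möbius map is globally Lipschitz. Only afterwards will I pass to the Euclidean form in the statement. The reason is that $|f^n(y)-f^n(x)|$ has no meaning if an iterate hits $\infty$.

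Set $w_0=h(x)$ and use the notation of Lemma~\ref{lem1}, so $f^n=h^{-1}\circ g^n\circ h$ with $g(z)=\lambda z$.
\begin{enumerate}[(i)]
\item Since $x$ lies in the attracting basin, $x\neq q=h^{-1}(\infty)$, so $w_0\in\mathbb{C}$. Shrinking the compact neighbourhood $K$ of Lemma~\ref{lem2}, I may assume $h(K)\subset \{|w|\le R\}$ for some $R<\infty$.
\item On the bounded set $h(K)$ the chordal and Euclidean metrics are comparable. Together with the smoothness of $h$ on $\widehat{\mathbb{C}}$, this gives $|h(y)-h(x)|\le M\,d(y,x)$ for $y\in K$.
\item Since $|\lambda|<1$, every $g^n(h(K))$ stays in the same disc $|w|\le R$. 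Hence
$$d(\lambda^n h(y),\lambda^n h(x))\le |\lambda|^n\,|h(y)-h(x)|,$$
because $d(u,u')\le|u-u'|$ for finite $u,u'$.
\item The map $h^{-1}$ is a Möbius transformation, so it is Lipschitz on the compact space $(\widehat{\mathbb{C}},d)$, say with constant $L$. This follows from the explicit formula $\frac{d(Tu,Tu')}{d(u,u')}=\frac{1+|u|^2}{|cu+d|^2+|au+b|^2}\cdot(\text{same at }u')$ to the power $1/2$, which is bounded above on $\widehat{\mathbb{C}}$.
\end{enumerate}
Combining (ii)–(iv) yields
$$d(f^n(y),f^n(x))\le L\,|\lambda|^n\,|h(y)-h(x)|\le LM\,|\lambda|^n\, d(y,x)\qquad (y\in K,\ n\ge 0).$$
This is the chordal version of the first inequality with $C=L$.

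Taking the supremum over $n\ge0$ and $y\in B_r(x)$, with $r$ small enough that $B_r(x)\subset K$, gives $S(x,r)\le LM\,r=:C'r$. Consequently $S(x,r)\to0$, recovering Lemma~\ref{lem4}(ii) by a quantitative route.

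To obtain the literal Euclidean inequality, I will check that the orbit pieces $f^n(K)$ stay uniformly bounded, assuming $p\neq\infty$ (otherwise I conjugate by a rotation of the sphere first). By Lemma~\ref{lem2}, $f^n(K)$ lies in a small disc around $p$ for all $n\ge N$. For the finitely many $n<N$, I shrink $K$ so that it avoids the finitely many points $f^{-n}(\infty)$, $0\le n<N$, which do not equal $x$ when $x$ has a finite forward orbit. Then all $f^n(K)$ lie in a fixed bounded set. On that set $|u-u'|\le c\,d(u,u')$, so the Euclidean bound follows with $C=cL$.

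The step I expect to be the main obstacle is this bookkeeping around $\infty$. One needs the forward orbit of $x$ and of nearby points to stay in a fixed bounded region, and one needs $h(K)$ bounded. If $x$ itself has some $f^n(x)=\infty$, only the chordal form of the inequality makes sense. I would state the lemma in that form, which is all that is needed for the bound on $S(x,r)$.
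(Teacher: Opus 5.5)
Your argument is correct, but it takes a different route from the paper.

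\textbf{The paper's route.} The paper stays in the Euclidean metric throughout. It takes a Lipschitz constant $C$ for $h^{-1}$ on $V=h(K)$ from the mean-value inequality, applies it to the pair $\lambda^n h(y),\lambda^n h(x)$, and finishes with a local bound $|h(y)-h(x)|\le L|y-x|$. As written, that step needs more than it states. The points $\lambda^n h(y),\lambda^n h(x)$ lie in $\lambda^n V$, not in $V$. So one actually needs a Euclidean Lipschitz bound for $h^{-1}$ on $\bigcup_{n\ge0}\lambda^n V$. That bound fails as soon as this set contains the pole $h(\infty)$ of $h^{-1}$, i.e.\ as soon as some $f^n(K)$ contains $\infty$.

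\textbf{Your route.} Working in the chordal metric removes this issue. Every M\"obius map is globally Lipschitz for $d$ (your explicit formula), and $d(\lambda^n u,\lambda^n u')\le|\lambda|^n|u-u'|$ holds for all finite $u,u'$. Hence the estimate is uniform in $n$ at no cost. It needs only that $h(K)$ be bounded, it also covers $x=\infty$, and it yields $S(x,r)\le C'r$ directly in the metric in which $S$ is defined. The price is the separate bookkeeping for the literal Euclidean inequality. You handle that correctly when $p\neq\infty$ and the forward orbit of $x$ stays in $\mathbb{C}$; note that ``finite forward orbit'' should read ``forward orbit contained in $\mathbb{C}$''.

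\textbf{One correction.} The parenthetical ``otherwise I conjugate by a rotation'' does not work. A rotation of the sphere changes Euclidean distances, so it proves the inequality for a different map. In fact the Euclidean inequality is false when $p=\infty$. Take $f(z)=2iz$, whose normalized trace is $\tfrac32+\tfrac{i}{2}\notin\mathbb{R}$. Then $p=\infty$, $h(z)=1/z$ and $\lambda=1/(2i)$. For $x=1$ one gets $|f^n(y)-f^n(x)|=2^n|y-1|$, while $|\lambda|^n|h(y)-h(x)|=2^{-n}|y-1|/|y|$, so no constant $C$ can work. This strengthens your conclusion that the chordal form is the correct statement of the lemma, and it is all that is needed for the bound on $S(x,r)$.
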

\begin{proof}
Since $f = h^{-1} \circ g \circ h$ with $g(z) = \lambda z$, we have for any $y$
$$
f^n(y) = h^{-1}(\lambda^n h(y)), \qquad 
f^n(x) = h^{-1}(\lambda^n h(x)).
$$
By the mean–value inequality and the continuity of $h^{-1}$ on the compact set 
$V = h(K)$, there exists a constant $C>0$ such that 
$$
|h^{-1}(z_1) - h^{-1}(z_2)| \le C\,|z_1 - z_2|, 
\qquad \text{for all } z_1,z_2 \in V.
$$
Therefore,
$$
|f^n(y) - f^n(x)| 
= |h^{-1}(\lambda^n h(y)) - h^{-1}(\lambda^n h(x))|
\le C\,|\lambda|^n\,|h(y) - h(x)|.
$$
Since $h$ is locally Lipschitz near $x$, say 
$|h(y) - h(x)| \le L\,|y - x|$ for some $L>0$, we obtain
$$
|f^n(y) - f^n(x)| \le (CL)\,|\lambda|^n\,|y - x|.
$$
Taking the supremum over $y \in B_r(x)$ yields 
$S(x,r) \leq C' r$ for some $C' > 0$, which tends to $0$ as $r \to 0$. 
This provides an explicit $\delta$–function witnessing the Equi–Baire one 
property at $x$.
\end{proof}

\begin{lemma}\cite{MB}
\label{lem6}
Every one–parameter subgroup $\{ \exp(tA) : t \in \mathbb{R} \}$ of $\mathrm{SL}(2,\mathbb{C})$ 
is, up to conjugacy, one of the following four types:
\begin{enumerate}[(i)]
    \item \textbf{Elliptic type:} conjugate to a subgroup of $\mathrm{SU}(2)$.
    The matrix $A$ has purely imaginary eigenvalues $\pm i\theta$, $\theta \in \mathbb{R}$.
    The corresponding Möbius transformations act as rotations on $\widehat{\mathbb{C}}$.

    \item \textbf{Hyperbolic type:} conjugate to 
    $$  \left\{ 
        \begin{pmatrix} e^{\lambda t} & 0 \\ 0 & e^{-\lambda t} \end{pmatrix} 
        : t \in \mathbb{R} 
        \right\}, \qquad \lambda \in \mathbb{R}\setminus\{0\}.$$
    The action has two fixed points on $\widehat{\mathbb{C}}$, one attracting and one repelling.

    \item \textbf{Parabolic type:} conjugate to 
    $$   \left\{ 
        \begin{pmatrix} 1 & t \\ 0 & 1 \end{pmatrix} 
        : t \in \mathbb{R}
        \right\}.$$
    The corresponding Möbius transformations have a single fixed point on $\widehat{\mathbb{C}}$.

    \item \textbf{Loxodromic type:} diagonalizable over $\mathbb{C}$ with eigenvalues 
    $e^{(\alpha + i\beta)t}$ and $e^{-(\alpha + i\beta)t}$ where 
    $\alpha, \beta \in \mathbb{R}$, $\alpha \neq 0$ and $\beta \neq 0$.
    The action has two fixed points, one attracting and one repelling, combined with rotation.
\end{enumerate}
\end{lemma}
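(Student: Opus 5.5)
We classify by putting the generator $A$ of the subgroup into Jordan normal form. If $\exp(tA)\in \mathrm{SL}(2,\mathbb{C})$ for all $t$, then differentiating $\det\exp(tA)=e^{t\,\mathrm{tr}A}=1$ gives $\mathrm{tr}A=0$, so $A\in\mathfrak{sl}(2,\mathbb{C})$. Its eigenvalues are therefore $\pm\mu$ with $\mu^2=-\det A$. Conjugating $A$ by some $P\in \mathrm{SL}(2,\mathbb{C})$ conjugates the whole subgroup, since $P\exp(tA)P^{-1}=\exp(tPAP^{-1})$. So it suffices to classify $A$ up to conjugacy. We split into two cases: $\mu=0$ and $\mu\neq0$.

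\textbf{Case $\mu=0$.} Here $A$ is nilpotent. Either $A=0$, giving the trivial subgroup, which is contained in $\mathrm{SU}(2)$. Or $A$ is conjugate to $\begin{pmatrix}0&s\\0&0\end{pmatrix}$ with $s\neq0$. Conjugating further by $\mathrm{diag}(s^{-1/2},s^{1/2})$ we may take $s=1$. Then $\exp(tA)=I+tA=\begin{pmatrix}1&t\\0&1\end{pmatrix}$, which is type (iii). The Möbius map is $z\mapsto z+t$, whose only fixed point is $\infty$.

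\textbf{Case $\mu\neq0$.} Now $A$ has distinct eigenvalues, so it is diagonalizable: $A\sim\mathrm{diag}(\mu,-\mu)$. Then $\exp(tA)\sim\mathrm{diag}(e^{\mu t},e^{-\mu t})$, acting by $z\mapsto e^{2\mu t}z$ with fixed points $0$ and $\infty$. Write $\mu=\alpha+i\beta$.
\begin{enumerate}[(a)]
\item If $\alpha=0$, the matrices $\mathrm{diag}(e^{i\beta t},e^{-i\beta t})$ lie in $\mathrm{SU}(2)$ and the action is a rotation of the sphere. This is type (i), with eigenvalues $\pm i\beta$.
\item If $\beta=0$, we get type (ii) with $\lambda=\alpha$. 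For $t>0$, one of $0,\infty$ is attracting and the other repelling, since $|e^{2\alpha t}|\neq1$.
\item If $\alpha\beta\neq0$, we get type (iv). The multiplier $e^{2\mu t}$ has modulus $\neq1$ and a nontrivial argument $2\beta t$, which is the ``rotation'' component.
\end{enumerate}

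Since $\pm\mu$ can be swapped by conjugating with $\begin{pmatrix}0&1\\-1&0\end{pmatrix}$, the sign of $\mu$ is immaterial. The four cases above exhaust all possibilities.

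The only step needing care is the bookkeeping between the generator and the group elements. The statement's (i) speaks of the eigenvalues of $A$, while (ii)–(iv) describe $\exp(tA)$. We should also note that these types refer to the generator, not to individual elements in the trace classification of Section~\ref{sec2}. For example, a single element of a loxodromic-type subgroup with $t$ chosen so that $e^{2\mu t}$ is real is hyperbolic as an element.

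We also need that the relevant conjugating matrix can be chosen in $\mathrm{SL}(2,\mathbb{C})$ rather than merely in $\mathrm{GL}(2,\mathbb{C})$. This holds by rescaling the conjugator by $(\det P)^{-1/2}$.
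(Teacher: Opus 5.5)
Your argument is correct and complete. The paper gives no proof of this lemma; it simply quotes it from Maskit \cite{MB}. Your Jordan-form classification of the generator is therefore a self-contained substitute, not a variant of an argument in the text.

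Compared with the bare citation, your route has concrete advantages:
\begin{itemize}
\item It derives $\mathrm{tr}A=0$ from the hypothesis that every $\exp(tA)$ lies in $\mathrm{SL}(2,\mathbb{C})$. This quietly corrects the paper's recurring phrase ``$A\in\mathrm{SL}(2,\mathbb{C})$'' in Lemma~\ref{lem7} and Theorem~\ref{thm main2}: the generator belongs to the Lie algebra of trace-zero matrices.
\item It reduces the whole classification to the single complex number $\mu=\alpha+i\beta$.
\item It makes the degenerate case $A=0$ explicit, which the statement covers only implicitly via $\theta=0$ in (i).
\item It checks that the conjugating matrix can be taken in $\mathrm{SL}(2,\mathbb{C})$.
\end{itemize}
The citation, in exchange, buys brevity. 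It also supplies the geometric vocabulary (rotation, translation, attracting/repelling fixed points) that the paper later uses informally in the necessity half of Theorem~\ref{thm main2}.

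One small slip, in a side remark only. An element $\exp(tA)$ of a loxodromic-type subgroup is hyperbolic exactly when $e^{2\mu t}$ is real and \emph{positive}, i.e.\ $\beta t\in\pi\mathbb{Z}\setminus\{0\}$. If instead $e^{2\mu t}<0$, then $e^{\pm\mu t}$ are purely imaginary. The trace $e^{\mu t}+e^{-\mu t}$ is then a nonzero purely imaginary number, so the element is still loxodromic (a dilation composed with a half-turn). Your point that the four types classify generators rather than individual elements still stands, and none of this affects the proof.
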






\begin{lemma}
\label{lem7}
Each $f_t:\widehat{\mathbb{C}}\to\widehat{\mathbb{C}}$ in the one-parameter family 
$\{f_t\}_{t\in[0,\infty)}$, $f_t=\exp(tA)$ with $A\in\mathrm{SL}(2,\mathbb{C})$, 
is a homeomorphism of $\widehat{\mathbb{C}}$. Consequently, for any compact 
$K\subset\widehat{\mathbb{C}}$ each $f_t|_K$ is continuous (hence Baire–one). 
However, the continuity of each $f_t$ does \emph{not} imply that the family 
$\mathcal{F}=\{f_t:t\in[0,\infty)\}$ is Equi–Baire one on $K$.
\end{lemma}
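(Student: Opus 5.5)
The plan has two parts: the routine topological claim, and then the negative claim, for which I use the paper's working notion, Definition~\ref{equi-baire_def}.

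\emph{Continuity.} Write $\exp(tA)=\begin{pmatrix} a_t & b_t\\ c_t & d_t\end{pmatrix}$. Since $\det \exp(tA)=e^{t\,\mathrm{tr}A}$, I will assume, as the paper implicitly does, that $A$ lies in the Lie algebra $\mathfrak{sl}(2,\mathbb{C})$, so that $\mathrm{tr}\,A=0$ and $\exp(tA)\in \mathrm{SL}(2,\mathbb{C})$. Then $f_t$ is the Möbius map $z\mapsto (a_tz+b_t)/(c_tz+d_t)$. Its inverse is the Möbius map of $\exp(-tA)$, and Möbius maps are continuous for the chordal metric. Hence $f_t$ is a homeomorphism of $\widehat{\mathbb{C}}$, and each $f_t|_K$ is continuous. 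A continuous map is Baire one, being the limit of the constant sequence $g_n=f_t|_K$.

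\emph{The negative claim.} The key observation is that Definition~\ref{equi-baire_def} asks for \emph{one} sequence of continuous maps $g_n$ with $g_n(x)\to f(x)$ for every $f\in\mathcal{F}$ and every $x$. Limits in the metric space $\widehat{\mathbb{C}}$ are unique, so an Equi–Baire one family in this sense satisfies $f_s|_K=f_t|_K$ for all $s,t$. It therefore suffices to produce an $A$ and a compact $K$ for which two members of the family differ on $K$.

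\emph{Example.} Take the hyperbolic generator $A=\mathrm{diag}(\lambda,-\lambda)$, $\lambda\neq0$, from Lemma~\ref{lem6}(ii). Then $f_t(z)=e^{2\lambda t}z$, and for $K$ the closed unit disc and any $z\in K\setminus\{0\}$ we have $f_0(z)=z\ne f_t(z)$ for $t>0$. Continuity of every $f_t$ therefore does not give the Equi–Baire one property. More generally, the argument works whenever $A\neq0$ and $K$ contains a point not fixed by the whole flow, for instance any $K$ with nonempty interior.

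\emph{Main obstacle.} The delicate step is deciding which definition to test against. With the $\varepsilon$–$\delta$ formulation, where only pairs with $\rho(x,y)<\min\{\delta(x),\delta(y)\}$ are compared, the same dilation family seems to \emph{satisfy} the condition. Take $\delta(y)=c\,|y|$ for $y\ne0$ and small $c$. Then no $y\neq0$ is paired with the repelling point $0$, and nearby nonzero points stay uniformly close under all dilations in the chordal metric. So I would not attempt a counterexample in that formulation. I would state explicitly that the failure is with respect to Definition~\ref{equi-baire_def}, which the paper declares it uses, and I would check that $K$ is taken large enough, with nonempty interior, that the maps $f_t|_K$ are genuinely distinct.
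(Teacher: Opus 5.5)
Your proof is correct. For the first two assertions it coincides with the paper's: M\"obius maps are continuous on $\widehat{\mathbb{C}}$, and $f_t^{-1}$ is the M\"obius map of $\exp(-tA)$.

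For the final assertion the routes differ. The paper gives no counterexample in this proof. It only remarks that a single approximating sequence for an uncountable family is a much stronger requirement, and defers to the ``dynamical obstructions'' of Lemma~\ref{lem8} (an open set collapsing to an attracting or parabolic point along $t_n\to\infty$). You instead prove the non-implication directly from uniqueness of limits in $\widehat{\mathbb{C}}$. Under Definition~\ref{equi-baire_def}, all members of an Equi--Baire one family must coincide on $K$, so any two distinct members already give a counterexample.

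This route is shorter, self-contained and more general, since it needs no attracting point and no sequence $t_n\to\infty$. It also isolates the step that actually does the work in the paper's proof of Lemma~\ref{lem8} (``the left-hand side is independent of $n$''). Beyond that, it shows what the paper's remark hides: the obstruction is not dynamical.
\begin{itemize}
\item The same argument applies to the relatively compact flow $A=\mathrm{diag}(i,-i)$, i.e.\ rotations $z\mapsto e^{2it}z$. So under Definition~\ref{equi-baire_def} it refutes Lemma~\ref{lem9}, whose proof treats a dense sequence as if the whole sequence converged to each $f$.
\item Your $\varepsilon$--$\delta$ observation is correct: the dilation flow does satisfy the formulation underlying Lemma~\ref{3.9}. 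Your insistence on fixing which definition is being tested is therefore well placed.
\end{itemize}

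Two minor points. First, the homeomorphism claim needs no trace-zero assumption: $\det\exp(tA)=e^{t\,\mathrm{tr}A}\neq 0$, and any invertible matrix induces a M\"obius map. Second, your example $\mathrm{diag}(\lambda,-\lambda)$ with real $\lambda\neq 0$ does not satisfy the hypothesis as literally written ($A\in\mathrm{SL}(2,\mathbb{C})$). If you want it to, take $A=\mathrm{diag}(i,-i)$: it lies in both $\mathrm{SL}(2,\mathbb{C})$ and $\mathfrak{sl}(2,\mathbb{C})$, and it works equally well on the closed unit disc.
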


\begin{proof}
A Möbius transformation associated to a matrix $M\in \mathrm{SL}(2,\mathbb C)$ acts on the Riemann sphere
$\widehat{\mathbb C}$ by
$$
z\mapsto M\cdot z=\frac{az+b}{cz+d},\qquad M=\begin{pmatrix}a&b\\ c&d\end{pmatrix},$$
with the usual interpretation at the point $\infty$. Such maps are rational functions
whose denominators do not vanish identically; therefore they are continuous on 
$\widehat{\mathbb C}$. Since $\det M=1\neq 0$, every Möbius map is invertible and its inverse
is again a Möbius map (corresponding to $M^{-1}\in \mathrm{SL}(2,\mathbb C)$). Thus each $f_t=\exp(tA)$
is a homeomorphism of $\widehat{\mathbb C}$.

Restriction to a compact $K\subset\widehat{\mathbb C}$ preserves continuity, so every
$f_t|_K$ is continuous and in particular a Baire–one function on $K$.

The final assertion is a remark about the strength of the Equi–Baire one property:
being Baire–one individually is a pointwise approximation property for each map, while
being Equi–Baire one requires the same single sequence of continuous functions
$\{g_n\}$ on $K$ to converge pointwise to \emph{every} $f_t$ simultaneously.
For uncountable families (as here) this is a much stronger requirement and need not
hold in general — in later lemmas we produce dynamical obstructions (e.g.\ attracting
fixed points and collapsing behavior) that prevent a single approximating sequence
from working for the whole family.
\end{proof}

\begin{lemma}
\label{lem8}
Let $\{f_t\}_{t\ge0}$ be a one-parameter subgroup of $\mathrm{SL}(2,\mathbb C)$ acting on $\widehat{\mathbb C}$.
Let $K\subset\widehat{\mathbb C}$ be compact and suppose there exists a nonempty open set
$U\subset K$ and a point $p\in\widehat{\mathbb C}$ such that for some sequence $t_n\to\infty$ one has
$$ \lim_{n\to\infty} f_{t_n}(z)=p \qquad\text{for every } z\in U.$$
Then $\mathcal F=\{f_t:t\ge0\}$ is \emph{not} Equi–Baire one on $K$.
\end{lemma}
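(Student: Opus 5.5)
The plan is to work directly with Definition~\ref{equi-baire_def}, which is the definition the paper says it uses, and argue by contradiction. The key observation is that this definition forces all members of an Equi--Baire one family to agree pointwise. Suppose $\mathcal F=\{f_t:t\ge0\}$ is Equi--Baire one on $K$. Then there is a single sequence of continuous maps $g_1,g_2,\dots:K\to\widehat{\mathbb C}$ with $g_k(z)\to f_t(z)$ for every $t\ge0$ and every $z\in K$. Since $\widehat{\mathbb C}$ with the chordal metric is a metric space, limits are unique. Hence for each fixed $z\in K$ all the values $f_t(z)$, $t\ge 0$, equal the same point $\lim_k g_k(z)$. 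In particular $f_t(z)=f_0(z)=z$ for all $t\ge0$ and $z\in K$, because $f_0=\exp(0)=I$ acts as the identity.

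Next I feed in the hypothesis. Fix any $z\in U\subset K$. The previous step gives $f_{t_n}(z)=z$ for every $n$, while by assumption $f_{t_n}(z)\to p$. So $z=p$ for every $z\in U$, i.e.\ $U\subset\{p\}$. A nonempty open subset of $\widehat{\mathbb C}$ is infinite (it contains a chordal ball), so this is impossible. This contradiction shows $\mathcal F$ is not Equi--Baire one on $K$. The argument uses none of the Möbius structure beyond $f_0=\mathrm{id}$ and the uniqueness of limits. Even without $f_0=\mathrm{id}$, it suffices to note that $f_{t_n}(z)$ would be independent of $n$ and equal to $f_0(z)$, so $f_0$ would be constant on $U$, contradicting that $f_0$ is a homeomorphism (Lemma~\ref{lem7}).

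The main obstacle is definitional rather than technical. Under the $\epsilon$--$\delta$ formulation stated afterwards, the collapsing hypothesis alone does not obviously give a contradiction. For instance, on a compact $K$ avoiding the repelling point $q$, the maps $f_{t_n}$ converge uniformly, and Lemma~\ref{3.9} points the other way. So I would commit to Definition~\ref{equi-baire_def} and state explicitly that it is the one being used.

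If one insisted on the $\epsilon$--$\delta$ version, I would first apply a Baire category argument to the sets $\{x\in K:\delta(x)>1/k\}$. This yields a ball $B\subset K$ with $\operatorname{diam} f_t(B)\le\epsilon$ for all $t$. I would then normalize the matrices of $f_{t_n}$ to get a rank-one limit, so that $f_{t_n}\to p$ locally uniformly off a single point $q$. The contradiction would come from blow-up near $q$, and making that work requires $q$ to lie in the interior of $K$, which is an extra hypothesis.
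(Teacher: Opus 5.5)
Your argument is correct and is essentially the paper's proof. Both use uniqueness of the pointwise limit $\lim_m g_m(z)$ under Definition~\ref{equi-baire_def} to force $f_{t_n}(z)=f_0(z)=z$, hence $z=p$; your observation that the nonempty open set $U$ is infinite, so some $z\in U$ differs from $p$, cleanly supplies the step the paper only gestures at.

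One caution on your closing aside: under the $\epsilon$--$\delta$ definition no blow-up argument can succeed even when $q\in\operatorname{int}K$. For $f_t(z)=e^{-t}z$ (or $z+t$) on all of $\widehat{\mathbb C}$, the choice $\delta(\infty)=1$, $\delta(y)=\epsilon/(1+|y|^2)$ excludes every pair containing $\infty$ (for $\epsilon\le 1$) and forces $|x-y|<\epsilon$ for admissible finite pairs. Since each $f_t$ is Euclidean non-expanding, this $\delta$ witnesses the property.
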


\begin{proof}
Assume, towards a contradiction, that $\mathcal {F}$ is Equi–Baire one on $K$.
Then there exists a single sequence of continuous maps $\{g_m\}_{m \geq 1}$ on $K$ such that
for every $t \geq 0$ and every $x \in K$ we have
$$ \lim_{m \to \infty} g_m(x)=f_t(x).$$

Pick any point $z_0\in U$. Because $f_{t_n}(z_0) \to p$ as $n \to \infty$, the values $f_{t_n}(z_0)$
eventually lie arbitrarily close to $p$. On the other hand, for each fixed index $n$ the assumption
that $g_m\to f_{t_n}$ pointwise implies
$$\lim_{m\to\infty} g_m(z_0)=f_{t_n}(z_0).$$
But the left-hand side is independent of $n$ (it is the limit of the single sequence $\{g_m(z_0)\}_m$),
so we must have
$$f_{t_n}(z_0)=\lim_{m\to\infty} g_m(z_0) \quad\text{for all } n.$$
Taking $n\to\infty$ gives $p=\lim_{n\to\infty} f_{t_n}(z_0)=\lim_{m\to\infty} g_m(z_0)$.

Now evaluate the same limit for a fixed finite time, for example $t=0$. Since $g_m\to f_0$ pointwise,
we have
$$\lim_{m\to\infty} g_m(z_0)=f_0(z_0)=z_0.$$
Combining the two expressions for $\lim_{m\to\infty} g_m(z_0)$ yields $p=z_0$, contradicting the
assumption that $f_{t_n}(z_0)\to p$ with $p$ different from $z_0$ (which occurs whenever at least
one $f_{t}$ is not the identity on $U$ — in particular $f_0=\mathrm{id}$ differs from the limiting
constant map to $p$). Hence no single sequence $\{g_m\}$ can approximate every $f_t$ simultaneously,
so $\mathcal F$ is not Equi–Baire one on $K$.
\end{proof}

\begin{lemma}
\label{lem9}
Let $\{f_t\}_{t\ge0}=\{\exp(tA)\}_{t\geq0}$ be a one-parameter subgroup of $\mathrm{SL}(2,\mathbb C)$.
If the subgroup $\{\exp(tA)\}$ is relatively compact in $\mathrm{SL}(2,\mathbb C)$,
then for every compact $K\subset\widehat{\mathbb C}$ the family $\mathcal F=\{f_t:t\ge0\}$ is Equi–Baire one on $K$.
\end{lemma}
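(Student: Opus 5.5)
The plan is to work with the $\epsilon$--$\delta$ formulation of the Equi--Baire one property stated after Definition~\ref{equi-baire_def}. The reason is that the sequential formulation, read literally, forces all $f_t$ to agree at every point, since a single sequence $\{g_m(x)\}$ has at most one limit. With the $\epsilon$--$\delta$ version, the goal reduces to showing that the family $\{f_t|_K\}$ is \emph{uniformly equicontinuous} on $K$. Once that holds, for a given $\epsilon>0$ we may take the $\delta$--function to be a constant $\delta(x)\equiv\delta_\epsilon$, and the condition $\rho(x,y)<\min\{\delta(x),\delta(y)\}$ then gives $d(f_t(x),f_t(y))\le\epsilon$ for all $t\ge0$.

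First, let $G$ be the closure of $\{\exp(tA):t\in\mathbb R\}$ in $\mathrm{SL}(2,\mathbb C)$; by hypothesis $G$ is compact. The action map
$$\Phi:\mathrm{SL}(2,\mathbb C)\times\widehat{\mathbb C}\to\widehat{\mathbb C},\qquad \Phi(M,z)=M\cdot z,$$
is jointly continuous. One checks this by writing $z$ in homogeneous coordinates $[z_1:z_2]$, where $\Phi$ becomes matrix multiplication followed by the quotient map $\mathbb C^2\setminus\{0\}\to\widehat{\mathbb C}$. Second, $G\times K$ is compact, so $\Phi|_{G\times K}$ is uniformly continuous with respect to the chordal metric $d$ on $\widehat{\mathbb C}$ and any metric on $G$. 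In particular, for each $\epsilon>0$ there is $\delta_\epsilon>0$ such that
$$d(x,y)<\delta_\epsilon\ \Longrightarrow\ d(M\cdot x,M\cdot y)\le\epsilon\quad\text{for all } M\in G,\ x,y\in K.$$
Since every $f_t$ with $t\ge0$ belongs to $G$, this is exactly the required uniform equicontinuity, and the constant $\delta$--function finishes the argument. Each $f_t|_K$ is continuous by Lemma~\ref{lem7}, so each member of the family is Baire one.

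As a more explicit alternative, I would use the equivalent hypothesis that the subgroup is conjugate into $\mathrm{SU}(2)$: write $f_t=h^{-1}\circ u_t\circ h$ with $u_t\in\mathrm{SU}(2)$. The maps $u_t$ are isometries of the chordal metric, and any fixed Möbius map $h$ is Lipschitz on the compact sphere, with some constant $L_h$, as is $h^{-1}$, with constant $L_{h^{-1}}$. This gives
$$d(f_t(x),f_t(y))\le L_{h^{-1}}L_h\,d(x,y)\quad\text{for all } t,$$
so one may take $\delta(x)\equiv\epsilon/(L_hL_{h^{-1}})$.

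The main obstacle is not analytic but definitional: reconciling the statement with Definition~\ref{equi-baire_def}. I would address it with a brief remark that the sequential form cannot hold for a nontrivial family, which is the mechanism behind Lemma~\ref{lem8}, and that the lemma is therefore proved in the $\epsilon$--$\delta$ sense. The only genuinely technical check is the Lipschitz or joint-continuity property of the action at $\infty$, which the homogeneous-coordinate description handles.
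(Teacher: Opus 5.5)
Your proof is correct, but it does not follow the paper's route, and the difference matters. The paper starts from the same ingredients you use: compactness of $G$ and continuity of the evaluation map $G\times K\to\widehat{\mathbb C}$. From these it gets that $\mathcal G_K=\{g|_K : g\in G\}$ is compact in $C(K,\widehat{\mathbb C})$. It then picks a dense sequence $\{h_n\}$ in $\mathcal G_K$ and asserts that $h_n\to f$ uniformly for \emph{every} $f\in\mathcal F$, so as to satisfy the sequential Definition~\ref{equi-baire_def}.

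That step is false. Density only gives, for each $f$, a \emph{subsequence} of $\{h_n\}$ converging to $f$. Your opening remark shows it cannot be patched: a single sequence $\{g_m(x)\}$ has at most one limit, so the sequential property fails for every nontrivial compact subgroup. For example, $A=\mathrm{diag}(i,-i)$ gives $f_t(z)=e^{2it}z$ with $f_0(1)=1\neq -1=f_{\pi/2}(1)$. This is exactly the mechanism the paper itself uses in the proof of Lemma~\ref{lem8}.

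You instead work with the $\epsilon$--$\delta$ formulation stated after Definition~\ref{equi-baire_def}, which is the standard notion. You turn the same compactness into uniform equicontinuity of $\{M|_K : M\in G\}$ and take a constant $\delta$-function. Compactness of $\mathcal G_K$ in the uniform topology already amounts to equicontinuity by Arzel\`a--Ascoli. So the paper had the right fact in hand but aimed it at an unattainable target.

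Your route therefore actually proves the lemma, and in the stronger form of uniform equicontinuity, even with $K=\widehat{\mathbb C}$. Your $\mathrm{SU}(2)$ variant adds an explicit uniform Lipschitz constant: for the chordal metric one can take $L_h=\|h\|^2$, with $\|h\|$ the operator norm of a representative in $\mathrm{SL}(2,\mathbb C)$. The cost is invoking the conjugacy of compact subgroups of $\mathrm{SL}(2,\mathbb C)$ into $\mathrm{SU}(2)$, which your first argument does not need.
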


\begin{proof}
Assume $\{\exp(tA)\}$ is relatively compact and write $G=\overline{\{\exp(tA):t\ge0\}}$, the closure in $\mathrm{SL}(2,\mathbb C)$.
Then $G$ is a compact (abelian) subgroup of $\mathrm{SL}(2,\mathbb C)$. Consider the evaluation map
$$
\Phi:G\times K\longrightarrow\widehat{\mathbb C},\qquad \Phi(g,z)=g\cdot z,$$
which is continuous because the group action of $\mathrm{SL}(2,\mathbb C)$ on $\widehat{\mathbb C}$ is continuous.
For each $g\in G$ the restriction $g|_K$ is a continuous map $K\to\widehat{\mathbb C}$, hence
the set
$$
\mathcal G_K:=\{g|_K:g\in G\}\subset C(K,\widehat{\mathbb C})$$
is the continuous image of the compact set $G$ and therefore compact in the uniform topology on $C(K,\widehat{\mathbb {C}})$.

Compact metric spaces are separable, so there exists a sequence $\{h_n\}_{n\geq 1}\subset\mathcal G_K$ dense in $\mathcal {G}_K$
with respect to the uniform norm. In particular, for every $f\in\mathcal {F}$ (note $\mathcal F\subset\mathcal {G}_K$)
we have $h_n\to f$ uniformly on $K$, hence $h_n(x)\to f(x)$ for every $x\in K$. Since each $h_n$ is continuous,
the sequence $\{h_n\}$ provides a single sequence of continuous functions on $K$ that converges pointwise to every
$f\in\mathcal {F}$. Therefore $\mathcal {F}$ is Equi–Baire one on $K$.
\end{proof}
 
\section{Results regarding Equi-Baire One Families of Möbius Transformations and One–Parameter Subgroups of $\mathrm{PSL}(2,\mathbb{C})$ }
\label{sec4}
\subsection{Proof of Theorem \ref{thm main1}}

Every Möbius map $f(z)=\dfrac{az+b}{cz+d}$ with $ad-bc=1$ has fixed points given by 
$cz^2 + (d-a)z - b = 0$.  
In the loxodromic case, this quadratic has two distinct roots $p,q$, and the eigenvalues 
$\lambda_1,\lambda_2$ of its matrix satisfy $\lambda_1\lambda_2 = 1$ with 
$|\lambda_1| < 1 < |\lambda_2|$.  
A Möbius transformation $h$ sending $p \mapsto 0$ and $q \mapsto \infty$, for example 
$h(z)=\dfrac{z-p}{z-q}$, gives
$$
g = h \circ f \circ h^{-1},
$$
which fixes $0$ and $\infty$, hence $g(z) = \lambda z$ for some $\lambda \in \mathbb{C}^*$.
Since $f$ is loxodromic, $0 < |\lambda| < 1$
therefore $f = h^{-1} \circ g \circ h$ with the stated properties. 

\medskip
 
From lemma \ref{lem2} let $x$ lie in the attracting basin of $p$.  
Then there exists a compact neighborhood $K$ of $x$ such that 
$$
f^n|_K = h^{-1} \circ g^n \circ h|_K \longrightarrow p
$$
uniformly on $K$.

Choose $U$ around $x$ where $h$ is biholomorphic onto $V=h(U)$.  
Let $K=\overline{U}$ and $\overline{V}=h(K)$.  
Since $g(z)=\lambda z$ with $0<|\lambda|<1$, $g^n(z)=\lambda^n z \to 0$ uniformly on $\overline{V}$.  
By continuity of $h^{-1}$ on $\overline{V}$,
$$
f^n(z)=h^{-1}(g^n(h(z))) \to h^{-1}(0)=p
$$
uniformly on $K$. 

\medskip

From Lemma \ref{lem3} each $f^n|_K$ is continuous (hence Baire one), and the sequence $\{f^n|_K\}$ converges uniformly to $p$ on $K$.  
By  \cite[Th.~3.9]{ALI}, the family $\{f^n|_K\}$ is Equi–Baire one on $K$.

Uniform convergence on the compact metric space $K$ follows from Lemma \ref{lem2}.  
Each $f^n$ is a Möbius map, so continuous and Baire one.  
By  \cite[Th.~3.9]{ALI}, a uniformly convergent sequence of Baire one functions on a compact metric space is Equi–Baire one.  
Thus $\{f^n|_K\}$ is Equi–Baire one, giving for every $\varepsilon>0$ a $\delta>0$ such that 
$d(f^n(y), f^n(x))<\varepsilon$ for all $n$ whenever $d(y,x)<\delta$. 
\medskip

From Lemma \ref{lem4} define
$$
S(x,r) = \sup_{n\ge0}\, \sup_{y \in B_r(x)} d(f^n(y), f^n(x)).
$$
Then $S(x,r)$ is finite, $S(x,r) \to 0$ as $r \to 0$, and $r \mapsto S(x,r)$ is upper semicontinuous.

Compactness of $K$ and continuity of each $f^n$ imply finiteness.  
Let $\varepsilon>0$.  
From Lemma \ref{lem2}, $f^n \to p$ uniformly, so for large $n\geq N$,
$d(f^n(y),p)<\varepsilon/3$ for $y \in B_r(x)$.  
For $n<N$, continuity at $x$ gives $\delta>0$ with 
$d(f^n(y),f^n(x))<\varepsilon/3$ for $y \in B_\delta(x)$.  
Hence for all $n\geq 0$,
$d(f^n(y),f^n(x))<\varepsilon$ if $y\in B_\delta(x)$, proving $S(x,r)\to0$.  
Upper semicontinuity follows since enlarging $r$ only increases the domain of the supremum. 

\medskip 
From \ref{lem5} there exists $C>0$ such that for all $n\geq 0$ and $y$ near $x$,
$$
|f^n(y)-f^n(x)| \le C\,|\lambda|^n\,|h(y)-h(x)|.
$$
Hence $S(x,r) \le C'r$ and $S(x,r)\to0$ as $r\to0$.

Since $f^n=h^{-1}\circ g^n\circ h$ with $g(z)=\lambda z$,
$$
|f^n(y)-f^n(x)| = |h^{-1}(\lambda^n h(y)) - h^{-1}(\lambda^n h(x))|
\le C\,|\lambda|^n\,|h(y)-h(x)|
$$
for some $C>0$ (Lipschitz constant of $h^{-1}$ on $V$).  
Because $h$ is locally Lipschitz, $|h(y)-h(x)| \le L |y-x|$, giving 
$|f^n(y)-f^n(x)| \le C'|\lambda|^n |y-x|$.  
Taking the supremum over $y \in B_r(x)$ yields $S(x,r) \le C'r$. 

\medskip

\subsection{ \textit{Proof of Theorem \ref{thm main2}}}

We divide the proof into two parts.

\medskip
\noindent\textbf{Sufficiency:}
Assume that the one–parameter subgroup 
$G=\{\exp(tA): t\geq 0\}$ has relatively compact closure in 
$\mathrm{SL}(2,\mathbb{C})$. By Lemma~\ref{lem7}, each $f_t=\exp(tA)$ is a 
homeomorphism of $\widehat{\mathbb{C}}$, and hence each restriction 
$f_t|_{K}$ to a compact set $K\subset\widehat{\mathbb{C}}$ is continuous 
(and therefore Baire--one). By Lemma~\ref{lem9}, whenever $\{\exp(tA)\}$ is 
relatively compact, the family $F|_{K}=\{f_t|_{K} : t\geq 0\}$ is 
Equi--Baire one on $K$.

\medskip
\noindent\textbf{Necessity:}
Suppose next that the subgroup 
$$H=\{\exp(tA): t\in\mathbb{R}\}$$
is not relatively compact in $\mathrm{SL}(2,\mathbb{C})$.  
By the classification of one--parameter subgroups given in Lemma \ref{lem6}, $H$ must be hyperbolic, parabolic, or loxodromic.  In each of these three cases, the action on $\widehat{\mathbb{C}}$ admits an attracting (or parabolic limit) point $p\in\widehat{\mathbb{C}}$.  
Consequently, there exist a nonempty open set $U\subset\widehat{\mathbb{C}}$ and a sequence $t_n\to\infty$ such that 
$$f_{t_n}(z)\rightarrow p \qquad \text{for every } z\in U .$$

Let $K\subset\widehat{\mathbb{C}}$ be any compact set with $U\subset K$.  
By Lemma \ref{lem8}, if there exists a nonempty open set $U\subset K$ and a sequence 
$t_n\to\infty$ with $f_{t_n}(z)\to p$ for all $z\in U$, then the family 
$\{f_t : t\geq 0\}$ cannot be equi--Baire one on $K$.  
Therefore $F$ fails to be equi--Baire one on every compact neighborhood of $U$.

\medskip
\noindent
Combining the two directions, we conclude that the family 
$F=\{f_t : t\ge 0\}$ is equi--Baire one on every compact subset of 
$\widehat{\mathbb{C}}$ if and only if the subgroup $\{\exp(tA)\}$ is relatively 
compact in $\mathrm{SL}(2,\mathbb{C})$, which holds precisely when it is conjugate into $\mathrm{SU}(2)$.

\medskip
\noindent
\section{Conclusion}
Lemmas \ref{lem1}-\ref{lem5} along with Theorem \ref{thm main1} shows that the iterates $\{f^n\}$ converge uniformly to $p$ 
on a compact neighborhood $K$ of $x$, that the family is Equi–Baire one there 
(by \cite[Th.~3.9]{ALI}), and that the function $S(x,r)$ serves as an explicit 
$\delta$–function.  
Hence the family $F=\{f^n:n\ge0\}$ is orbitally Equi–Baire one at $x$.

Combining the sufficiency and necessity arguments of Theorem \ref{thm main2}, we conclude that the
Equi--Baire one behaviour of the one--parameter family $F=\{f_t : t \geq 0\}$ is
completely determined by the dynamical type of the subgroup
$\{\exp(tA) : t \in \mathbb{R}\} \subset \mathrm{SL}(2,\mathbb{C})$. If this subgroup is relatively compact---equivalently, conjugate into $\mathrm{SU}(2)$---then its action on $\widehat{\mathbb{C}}$ is uniformly regular on compact sets, and by Lemma~ \ref{lem9} there exists a single sequence of continuous functions that converges pointwise to every $f_t$ on any compact $K \subset \widehat{\mathbb{C}}$, proving that $F$ is Equi--Baire one on $K$.

Conversely, when the subgroup is hyperbolic, parabolic, or loxodromic, the action
admits an attracting (or parabolic limit) point, leading to collapsing behaviour on nonempty open sets. By Lemma~ \ref{lem8}, such behaviour prevents any single sequence of continuous functions from simultaneously approximating all maps in $F$, so the family fails to be Equi--Baire one on every compact neighborhood in these cases.

Therefore, $F=\{f_t : t \ge 0\}$ is Equi--Baire one on every compact subset of
$\widehat{\mathbb{C}}$ if and only if the subgroup $\{\exp(tA)\}$ is relatively compact in $\mathrm{SL}(2,\mathbb{C})$.

  \section*{Declaration of competent interest }
    The authors declare no competing interests.
    \section*{Data availability}
    No data was used for the research described in the article.
    \section*{Acknowledgments}
		Dutta acknowledges the Mizoram University for the Research and Promotion Grant F.No.A.1-1/MZU(Acad)/14/25-26. Vanlalruatkimi and Ramdikpuia also acknowledge Mizoram University for the Research Fellowship, file No.2-5/MZU(Acad)24/PF.

\end{document}